\newtheorem{theorem}{Theorem}[section]
\newtheorem{proposition}{Proposition}[section]
\newtheorem{lemma}{Lemma}[section]
\newcommand{\dbar}{\overline{\partial}}
\newcommand{\ddbar}{\sqrt{-1}\partial\dbar}
\newcommand{\vp}{\varphi}
\newcommand{\ov}{\overline}
\renewcommand{\leq}{\leqslant}
\renewcommand{\geq}{\geqslant}
\renewcommand{\epsilon}{\varepsilon}
\title{Regularity of weak solutions of a complex Monge-Amp\`ere equation}
\author{G\'abor Sz\'ekelyhidi and Valentino Tosatti}
\begin{document}

\begin{abstract}
	We prove the smoothness of weak solutions to an elliptic complex
	Monge-Amp\`ere equation, using the smoothing property of the
	corresponding parabolic flow. 
\end{abstract}

\maketitle

\section{Introduction}
Let $(M,\omega)$ be a compact K\"ahler manifold. Our main result is the
following.
\begin{theorem}\label{thm:main}
	Suppose that $\vp\in PSH(M,\omega)\cap L^\infty(M)$ is a
	solution of the equation
	\[ (\omega + \ddbar\vp)^n = e^{-F(\vp,z)}\omega^n\]
	in the sense of pluripotential theory~\cite{BT76},
	where $F:\mathbf{R}\times M\to\mathbf{R}$ is smooth. Then $\vp$
	is smooth.
\end{theorem}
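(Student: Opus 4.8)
The plan is to deduce the regularity of $\vp$ from the smoothing property of the parabolic complex Monge--Amp\`ere flow
\[ \frac{\partial\psi}{\partial t}=\log\frac{(\omega+\ddbar\psi)^n}{\omega^n}+F(\psi,z),\qquad \psi(\cdot,0)=\vp, \]
whose normalization is chosen so that $\vp$, viewed as independent of $t$, is a stationary (weak) solution \emph{precisely because} it solves the given elliptic equation. The argument then splits in two. First, I would construct a solution $\psi$ of the flow with initial datum $\vp$ which is \emph{smooth on $M\times(0,T]$} for some $T>0$ --- this smoothing estimate is the bulk of the work. Second, I would show, by a comparison argument, that this solution coincides with the stationary one, i.e.\ $\psi(\cdot,t)\equiv\vp$; since $\psi(\cdot,t)$ is smooth for $t>0$, the theorem follows.

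For the first step, approximate $\vp$ by smooth $\omega$-plurisubharmonic functions $\vp_j\to\vp$ in $L^1(M)$ (by Demailly regularization, rescaled to remain genuinely $\omega$-psh), let $\psi_j$ be the smooth solution of the flow with $\psi_j(\cdot,0)=\vp_j$, and establish a priori estimates, uniform in $j$, that are allowed to degenerate like a negative power of $t$ as $t\to0^+$: (a) a $C^0$ bound on $\psi_j$ on a short interval $[0,T]$, from the maximum principle (at a spatial maximum $\omega+\ddbar\psi_j\leq\omega$, so $\partial_t\psi_j\leq F(\psi_j,z)$, and symmetrically from below); (b) a bound $|\partial_t\psi_j|\leq C/t$, from the maximum principle applied to suitable combinations of $t\,\partial_t\psi_j$ and $\psi_j$ (the $t$-weight absorbs the missing control of $\partial_t\psi_j$ at $t=0$), which also gives two-sided bounds on $(\omega+\ddbar\psi_j)^n/\omega^n$ for $t\geq\tau>0$; (c) a second-order estimate $\operatorname{tr}_\omega(\omega+\ddbar\psi_j)\leq C(\tau)$ on $M\times[\tau,T]$, via a parabolic Aubin--Yau computation for a time-weighted version of $\log\operatorname{tr}_\omega(\omega+\ddbar\psi_j)-B\psi_j$ with $B\gg1$, using (a)--(b); and (d) higher-order bounds on $M\times[\tau,T]$ from the parabolic Evans--Krylov theorem and Schauder bootstrapping. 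Letting $j\to\infty$ yields $\psi$, smooth on $M\times(0,T]$, with $\psi(\cdot,t)\to\vp$ in $L^1(M)$ as $t\to0^+$.

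For the second step, formally, at an interior maximum of $\psi(\cdot,t)-\vp$ one has there $(\omega+\ddbar\psi)^n\leq(\omega+\ddbar\vp)^n=e^{-F(\vp,z)}\omega^n$, hence $\partial_t\psi\leq F(\psi,z)-F(\vp,z)\leq C\,[\psi-\vp]^{+}$ since $F$ is locally Lipschitz in its first slot; combined with the symmetric estimate at a minimum and Gronwall's inequality (starting from $\psi(\cdot,0)=\vp$), this forces $\psi(\cdot,t)\equiv\vp$. This must be made rigorous by replacing the pointwise maximum principle with the comparison (domination) principle of pluripotential theory and by taking the $t\to0^+$ limit in capacity rather than uniformly, since $\vp$ is only bounded and upper semicontinuous; it amounts to a uniqueness statement for weak solutions of the parabolic flow with prescribed bounded initial datum. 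It follows that $\vp=\psi(\cdot,t)$ for all $t\in(0,T]$, so $\vp$ is smooth.

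The step I expect to be the main obstacle is the parabolic second-order estimate (c): one must obtain a Laplacian bound for the flow started from merely bounded data, tracking the time-weight so that the constants depend only on $\tau$ and not on the regularity of $\vp_j$, while the dependence of $F$ on $\psi$ itself contributes a term $F_{\vp\vp}\,|\nabla\psi_j|^2_\omega$ of indefinite sign to the evolution of $\operatorname{tr}_\omega(\omega+\ddbar\psi_j)$ that must be absorbed (via a preliminary gradient estimate or a more carefully chosen auxiliary function). A secondary difficulty is the comparison in the second step, where the mere boundedness of $\vp$ precludes a naive maximum principle and forces the pluripotential argument.
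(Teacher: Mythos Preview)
Your overall architecture---run the parabolic Monge--Amp\`ere flow from $\vp$, use its smoothing for $t>0$, and argue the flow is stationary---is the paper's. The substantive difference is in the approximation scheme, and it is exactly this difference that leaves a gap in your second step. The paper does \emph{not} approximate $\vp$ by generic smooth $\omega$-psh functions. Instead it first uses Ko\l odziej to observe that $\vp$ is continuous, takes any smooth (not necessarily psh) $u_k\to\vp$ uniformly, and then invokes Yau's theorem to produce smooth $\psi_k$ solving $(\omega+\ddbar\psi_k)^n=c_k\,e^{-F(u_k,z)}\omega^n$; Ko\l odziej's stability gives $\psi_k\to\vp$ uniformly. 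The point of this detour is that the initial speed of the flow started at $\psi_k$ is
\[
\dot\vp_k(0)=\log\frac{(\omega+\ddbar\psi_k)^n}{\omega^n}+F(\psi_k,z)-\log c_k=F(\psi_k,z)-F(u_k,z)-\log c_k\longrightarrow 0.
\]
Hence $\sup|\psi_k|$ and $\sup|\dot\vp_k(0)|$ are uniformly bounded, the a priori estimates (which in the paper are allowed to depend on $\sup|\dot\vp_0|$) are uniform in $k$, and the elementary bound $\sup|\dot\vp_k(t)|\leq e^{\kappa t}\sup|\dot\vp_k(0)|$ forces $\dot\Phi(t)=\lim_k\dot\vp_k(t)=0$ for every $t>0$. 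The limit flow is stationary \emph{for free}; no comparison with the rough $\vp$ is ever made.

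In your scheme the approximants are Demailly regularizations, so $\dot\psi_j(0)$ is uncontrolled; this is why you must settle for $|\partial_t\psi_j|\leq C/t$ and why the stationarity cannot be read off the approximating sequence. You are then forced to compare the smooth $\psi(\cdot,t)$ directly with the merely bounded $\vp$, and here there is a real gap: what you need is a uniqueness theorem for weak solutions of the \emph{parabolic} flow with continuous initial data, which you have not supplied. The pointwise argument you sketch (at a spatial max of $\psi-\vp$ one has $(\omega+\ddbar\psi)^n\leq(\omega+\ddbar\vp)^n$) requires a viscosity-supersolution property of $\vp$; the equivalence of pluripotential and viscosity solutions is itself a nontrivial theorem, and invoking the ``domination principle'' does not by itself yield the differential inequality $\frac{d}{dt}\sup(\psi-\vp)\leq C\sup|\psi-\vp|$ you need. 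The paper's device of manufacturing the approximants via Yau's theorem is precisely the missing idea that removes this obstacle. (Your anticipated difficulty in (c), the $F''|\nabla\psi|^2$ term, is genuine and is handled in the paper by a time-weighted B\l ocki gradient estimate feeding into the Aubin--Yau computation, as you guessed; but with the paper's approximation the bookkeeping is simpler since one may freely use $\sup|\dot\vp_0|$ rather than $C/t$ weights.)
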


In particular if $M$ is Fano, $\omega\in c_1(M)$ and $h_\omega$
satisfies $\ddbar h_\omega=\mathrm{Ric}(\omega)-\omega$ then we can set
$F(\vp,z)=\vp-h_\omega$. The result then implies that
K\"ahler-Einstein currents with bounded potentials are in fact smooth. Such weak
K\"ahler-Einstein metrics were
studied by Berman-Boucksom-Guedj-Zeriahi in~\cite{BBGZ}, as part of their 
variational approach to complex Monge-Amp\`ere equations.

It follows from Ko\l odziej~\cite{Kol08} (see also \cite{GKZ}) that the solution $\vp$ in
Theorem \ref{thm:main} is automatically $C^\alpha$ for some
$\alpha>0$, but it does not seem possible to use this directly
to get further
regularity. The difficulty is that in the
equation 
\[ (\omega + \ddbar \vp)^n = e^f\omega^n,\]
the $C^1$ estimate for $\vp$ (due to B\l ocki~\cite{Bl09} and Hanani \cite{Han}) depends on a
$C^1$ bound for $f$, and in turn the Laplacian estimate for $\vp$ (due
to Yau~\cite{Yau78} and Aubin \cite{Aub}) depends on the Laplacian of $f$. 

To get around this difficulty we look at the corresponding parabolic
flow
\[ \frac{\partial \vp}{\partial t} =
\log\frac{(\omega+\ddbar\vp)^n}{\omega^n} + F(\vp,z).\]
Following the construction of Song-Tian \cite{ST09} for the K\"ahler-Ricci flow, we show that to find a solution for a short time, it is enough to have a
$C^0$ initial condition $\vp_0$ for which $(\omega+\ddbar\vp_0)^n$ is
bounded (see also \cite{CD, CT, CTZ} for earlier results, as well as \cite{Sim} for a weaker statement in the Riemannian case). The solution of the
flow will be smooth at any positive time. Then we need to argue
that if the initial condition $\vp_0$ is a weak solution of the
elliptic problem then the flow is stationary, so in fact $\vp_0$ is
smooth.

In Section \ref{sec:existence} we show that the flow (with smooth
initial data) exists for a short time, which only depends on a bound for
$\sup|\vp_0|$ and $\sup|\dot{\vp}_0|$. In Section~\ref{sec:main} we use
this to construct a solution to the flow with rough initial data, and
we prove Theorem~\ref{thm:main}.

\subsection*{Acknowledgements} We would like to thank D.H. Phong for support and encouragement as well as
J. Song and J. Sturm 
for very valuable help and B. Weinkove for useful comments. The second-named author is also grateful to S.-T. Yau for his support.
The question answered in Theorem \ref{thm:main} arose after R. Berman's talk
at the workshop ``Complex Monge-Amp\`ere Equation'' at Banff in October 2009, and we would like to thank him as well as the participants and organisers of that workshop for providing a stimulating working environment. The first-named author was supported in part by National Science Foundation grant DMS-0904223.

\section{Existence for the parabolic equation}\label{sec:existence}
In this section we consider the parabolic equation
\begin{equation}\label{eq:flow}
	\frac{\partial\vp}{\partial t} = \log\frac{ (\omega+\ddbar
\vp)^n}{\omega^n} + F(\vp,z),
\end{equation}
where $F:\mathbf{R}\times M\to\mathbf{R}$ is
smooth and we have the smooth
initial condition $\vp|_{t=0}=\vp_0$.  We write $\dot{\vp_0}$ for $\frac{\partial}{\partial t}\vp$ at $t=0$.

The main result of this section is the following
\begin{proposition}\label{prop:flow}
	There exist $T > 0$ depending only on $\sup |\vp_0|, \sup
	|\dot{\vp}_0|$ (and $\omega$ and $F$), such that there is a smooth
	solution $\vp(t,z):[0,T]\times M\to\mathbf{R}$ to Equation
	(\ref{eq:flow}). Moreover we also have smooth functions
	$C_k:(0,T]\to\mathbf{R}$ depending only on $\sup |\vp_0|,
	\sup|\dot{\vp}_0|$ such that
	\begin{equation}\label{eq:bounds}
		\Vert\vp(t)\Vert_{C^k(M)} < C_k(t) 
	\end{equation}
	as long as $t\leqslant T$. Note that $C_k(t)\to\infty$ as $t\to 0$.
\end{proposition}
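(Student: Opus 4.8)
Since $\omega_{\vp_0}:=\omega+\ddbar\vp_0>0$, equation~(\ref{eq:flow}), rewritten as $\partial_t\vp=\log\det(\omega+\ddbar\vp)-\log\det\omega+F(\vp,z)$, is a quasilinear parabolic equation near $t=0$ which is concave in the complex Hessian of $\vp$. Standard parabolic theory (the implicit function theorem in parabolic H\"older spaces) will give a unique maximal smooth solution $\vp$ on $M\times[0,T_{\max})$, and by parabolic regularity this solution continues past $T_{\max}$ unless the complex Hessian $\omega+\ddbar\vp$ degenerates or $\|\vp(t)\|_{C^{2,\alpha}}$ blows up as $t\to T_{\max}$. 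So the whole task reduces to \emph{a priori estimates} that, on a fixed interval $[0,T_0]$ with $T_0$ depending only on $\sup|\vp_0|,\sup|\dot{\vp}_0|$ (and $\omega,F$), prevent both; I will moreover run these estimates with a time weight, so that they become insensitive to the higher-order norms of $\vp_0$ and thereby yield the smoothing bounds~(\ref{eq:bounds}). The guiding point throughout is that, in contrast with the elliptic problem, differentiating the flow in $t$ produces a benign heat-type equation for $\dot{\vp}$ which feeds the second-order estimate with no need to differentiate the right-hand side in space.

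\noindent\textbf{Zeroth-order estimates.} First I would bound $\vp$ by the maximum principle: at a spatial maximum of $\vp(\cdot,t)$ one has $\ddbar\vp\leq0$, so $\log\frac{(\omega+\ddbar\vp)^n}{\omega^n}\leq0$ and hence $\partial_t\vp\leq\sup_z F(\vp,z)$; comparison with the ODE $\dot a=\sup_z F(a,z)$, $a(0)=\sup\vp_0$ (and the analogous lower barrier) gives $\|\vp(t)\|_{C^0(M)}\leq C_0(t)$ on the interval $[0,T_0]$ on which these ODEs exist, which depends only on $\sup|\vp_0|$ and $F$. Next I differentiate~(\ref{eq:flow}) in $t$: writing $\omega_\vp:=\omega+\ddbar\vp$ and letting $\Delta_{\omega_\vp}$ be the Laplacian of the evolving metric, $\dot{\vp}$ satisfies $\partial_t\dot{\vp}=\Delta_{\omega_\vp}\dot{\vp}+F_\vp(\vp,z)\dot{\vp}$, so the maximum principle and Gronwall give $\|\dot{\vp}(t)\|_{C^0(M)}\leq e^{\Lambda t}\sup|\dot{\vp}_0|$, with $\Lambda$ a bound for $|F_\vp|$ over the range of $\vp$ just controlled. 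Since $\log\frac{\omega_\vp^n}{\omega^n}=\dot{\vp}-F(\vp,z)$, this pinches the Monge--Amp\`ere measure: $e^{-C_1(t)}\omega^n\leq\omega_\vp^n\leq e^{C_1(t)}\omega^n$ on $[0,T_0]$.

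\noindent\textbf{Second-order estimate.} This will be the crux. Using the Aubin--Yau (Chern--Lu) inequality for $\Delta_{\omega_\vp}\log\mathrm{tr}_\omega\omega_\vp$ together with $\ddbar\log\frac{\omega_\vp^n}{\omega^n}=\mathrm{Ric}(\omega)-\mathrm{Ric}(\omega_\vp)$, one finds that in
\[\big(\partial_t-\Delta_{\omega_\vp}\big)\log\mathrm{tr}_\omega\omega_\vp\leq C\,\mathrm{tr}_{\omega_\vp}\omega+\frac{S_\omega+\mathrm{tr}_\omega\!\big(\ddbar F(\vp,z)\big)}{\mathrm{tr}_\omega\omega_\vp}\]
the dangerous term $\mathrm{tr}_\omega\mathrm{Ric}(\omega_\vp)$ — precisely the one that in the elliptic Laplacian estimate forces control of $\Delta f$ — \emph{cancels}. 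What is left on the right involves only $\mathrm{tr}_\omega\omega_\vp$, $\mathrm{tr}_{\omega_\vp}\omega$ and $|\partial\vp|^2_\omega$; the gradient term is dealt with by a parabolic first-order estimate in the spirit of B\l ocki~\cite{Bl09}, and it is cleanest to control $|\partial\vp|_\omega$ and $\mathrm{tr}_\omega\omega_\vp$ \emph{together} by applying the maximum principle on $M\times[0,T_0]$ to a single quantity such as
\[Q=t\big(\log\mathrm{tr}_\omega\omega_\vp+\lambda\,|\partial\vp|^2_\omega\big)-A\vp,\]
with $\lambda$ small and $A$ large (depending on the bisectional curvature of $\omega$ and on $F$). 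The factor $t$ makes $Q|_{t=0}=-A\vp_0$, controlled by $\sup|\vp_0|$ alone and \emph{independent of the higher derivatives of $\vp_0$}; at an interior maximum, using the pinching of $\omega_\vp^n$ to pass between $\mathrm{tr}_\omega\omega_\vp$ and $\mathrm{tr}_{\omega_\vp}\omega$, one concludes for each $t\in(0,T_0]$ a two-sided bound $c(t)\,\omega\leq\omega_\vp\leq C(t)\,\omega$ with $c(t)^{-1},C(t)$ depending only on $\sup|\vp_0|,\sup|\dot{\vp}_0|$ and tending to $\infty$ as $t\to0$.

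\noindent\textbf{Higher order and conclusion.} On $M\times[\tau,T_0]$ the equation is now uniformly parabolic with $C^0$-controlled coefficients (the term $F(\vp,z)$ being controlled via the $C^0$ and $C^1$ bounds on $\vp$), so its concavity in the Hessian lets me invoke the parabolic Evans--Krylov theorem for an interior $C^{2,\alpha}$ bound, and parabolic Schauder estimates then bootstrap to $\|\vp\|_{C^k(M\times[\tau',T_0])}\leq C_k(\tau',\sup|\vp_0|,\sup|\dot{\vp}_0|)$ for all $k$ and $\tau'>\tau$; taking $\tau'=t/2$ defines the functions $C_k(t)$ of~(\ref{eq:bounds}), which blow up as $t\to0$ because $c(t)$ does. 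These bounds in particular keep $\|\vp(t)\|_{C^{2,\alpha}}$ finite and $\omega_\vp$ nondegenerate up to time $T_0$, so the maximal smooth solution cannot terminate before $T_0$; together with the classical short-time existence from the smooth datum near $t=0$ this produces the smooth solution on $[0,T]$ with $T=T_0$, completing the proof. I expect the second-order estimate to be the main obstacle: it must at once avoid any spatial derivative of $F(\vp,z)$ beyond what the first-order bound supplies — which is exactly what the cancellation of the Ricci term in the parabolic computation buys — and, through the time weight, be blind to the higher-order norms of $\vp_0$; these are the two features that let the flow succeed where Theorem~\ref{thm:main}'s elliptic equation resists a direct attack.
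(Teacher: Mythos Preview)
Your overall strategy is correct and matches the paper's: zeroth-order bounds via ODE comparison and Gronwall for $\dot\vp$, then time-weighted maximum-principle arguments for the gradient and Laplacian, and finally a bootstrap to higher order. The key observation you single out---that in the parabolic computation the term $\mathrm{tr}_\omega\mathrm{Ric}(\omega_\vp)$ cancels between $\partial_t\log\mathrm{tr}_\omega\omega_\vp$ and $\Delta_{\omega_\vp}\log\mathrm{tr}_\omega\omega_\vp$, leaving only $\Delta_\omega F(\vp,z)$, which involves $|\partial\vp|^2_\omega$ rather than $\Delta\vp$---is exactly what the paper exploits.

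The one place your sketch would likely stall is the specific test function $Q=t\big(\log\mathrm{tr}_\omega\omega_\vp+\lambda|\partial\vp|^2_\omega\big)-A\vp$. The heat operator applied to $|\partial\vp|^2_\omega$ produces a curvature term of order $\mathrm{tr}_{\omega_\vp}\omega\cdot|\partial\vp|^2_\omega$, and neither the good third-order term nor $-A\,\mathrm{tr}_{\omega_\vp}\omega$ absorbs this cross term without an a priori bound on $|\partial\vp|^2_\omega$---which is precisely what you are trying to prove. The paper therefore separates the two steps and uses \emph{different} time weights: first the gradient estimate via B\l ocki's quantity $K=t\log|\partial\vp|^2_\omega-\gamma(\vp)$ with a suitably convex $\gamma$ (both the logarithm and the convexity of $\gamma$ are essential to the cancellation), yielding $|\partial\vp|^2_\omega<e^{C/t}$; then the Laplacian estimate via $H=e^{-\alpha/t}\log\mathrm{tr}_\omega\omega_\vp-A\vp$ with $\alpha=C$, so that the weight $e^{-\alpha/t}$ exactly compensates the growth of $|\partial\vp|^2_\omega$ when it enters through $\Delta_\omega F(\vp,z)$. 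This forces an exponential-in-$1/t$ weight at the second step; a single linear weight $t$ as in your $Q$ will not close.

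For the higher-order estimates the paper does not invoke Evans--Krylov. It instead proves a direct parabolic Calabi-type bound on $S=|\nabla h\,h^{-1}|^2_{g_\vp}$ (the third-order quantity), then a separate bound on $|\mathrm{Ric}(g_\vp)|_{g_\vp}$ to control $\ddot\vp$ and $\partial\bar\partial\dot\vp$, before applying parabolic Schauder. Your Evans--Krylov route is a legitimate and shorter alternative once uniform parabolicity and the $C^1$ bound are in hand; what the paper's hands-on approach buys is an explicit form for the functions $C_k(t)$ in terms of the data, whereas Evans--Krylov gives the bounds nonconstructively. Either route completes the proof once the gradient and Laplacian estimates are done correctly.
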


The proof of the $C^1$ estimate is based on the arguments in B\l
ocki~\cite{Bl09} (see also \cite{Han, PS09}), whereas the $C^2$ estimate is
based on the Aubin-Yau second order estimate~\cite{Aub, Yau78} (see also \cite{ST09}
for the parabolic version we need here). The $C^3$ and higher order estimates follow the standard arguments in \cite{Yau78, Cao, PSS}, although there are a few new terms to control.

The existence of a smooth solution for $t\in[0,T')$ for some $T'>0$ that depends on the $C^{2,\alpha}$ norm of
$\vp_0$ is
standard. The aim is to obtain the estimates (\ref{eq:bounds}), which
allow us to extend the solution up to a time $T$, which only depends on
the initial condition in a weaker way.  We will write $\vp(t)$ for the
short time solution.

\begin{lemma}\label{lem:c0}
	There exists $T, C>0$ depending only on $\sup|\vp_0|$ and
	$\sup|\dot{\vp}_0|$ such that
	\begin{equation}\label{zero} |\vp(t)|, |\dot{\vp}(t)| < C,
\end{equation}
	as long as the solution exists and $t\leqslant T$. In particular 
	\begin{equation}\label{volest} \left |\log\frac{(\omega+\ddbar\vp)^n}{\omega^n}\right| <
	C\end{equation}
	for $t \leqslant T$.
\end{lemma}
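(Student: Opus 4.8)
The plan is to obtain the two $C^0$ bounds by successive applications of the maximum principle, and then read off the volume estimate \eqref{volest} algebraically from the flow equation. Set $K_0=\sup|\vp_0|$ and $A=\sup\{\,|F(s,z)|:|s|\le K_0+1,\ z\in M\,\}$, which is finite since $F$ is smooth and $M$ is compact. I first bound $\vp$ itself. The function $t\mapsto\max_M\vp(\cdot,t)$ is locally Lipschitz, and by the elementary lemma on derivatives of suprema it evolves with speed at most $\dot{\vp}(x_t,t)$, where $x_t$ is a point realizing the spatial maximum; at $x_t$ we have $\ddbar\vp\le 0$, hence $\log\frac{(\omega+\ddbar\vp)^n}{\omega^n}\le 0$ there, so by \eqref{eq:flow} this speed is $\le F(\vp(x_t,t),x_t)$, which is $\le A$ as long as $|\vp|\le K_0+1$ on the interval considered. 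The symmetric argument at a spatial minimum (where $\ddbar\vp\ge 0$, so the log term is $\ge 0$) gives a lower bound $-A$ on the speed of $t\mapsto\min_M\vp(\cdot,t)$. A standard continuity (open–closed) argument on the set of times where $|\vp|\le K_0+1$ then yields $|\vp(t)|\le K_0+At$ for all $t$ in the interval of existence, so choosing $T\le 1/A$ gives $|\vp(t)|\le K_0+1$ for $t\le T$.

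Next I bound $\dot{\vp}$. Differentiating \eqref{eq:flow} in $t$ gives the linear parabolic equation $(\partial_t-\Delta')\dot{\vp}=F_\vp(\vp,z)\,\dot{\vp}$, where $\Delta'$ is the complex Laplacian of the evolving metric $\omega+\ddbar\vp$. By the first step, for $t\le T$ the coefficient satisfies $|F_\vp(\vp,z)|\le A':=\sup\{\,|F_\vp(s,z)|:|s|\le K_0+1,\ z\in M\,\}<\infty$. The initial datum $\dot{\vp}_0=\log\frac{(\omega+\ddbar\vp_0)^n}{\omega^n}+F(\vp_0,z)$ is a fixed smooth function, so $\sup|\dot{\vp}_0|<\infty$. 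Applying the maximum principle to $e^{-A't}\dot{\vp}$ (equivalently, tracking $\max_M\dot{\vp}(\cdot,t)$ and $\min_M\dot{\vp}(\cdot,t)$ and using Gr\"onwall's inequality on the intervals where they have the unfavourable sign) gives $|\dot{\vp}(t)|\le e^{A't}\sup|\dot{\vp}_0|$ for $t\le T$. This is the bound \eqref{zero}, with $C$ depending only on $K_0$ and $\sup|\dot{\vp}_0|$ once we enlarge it to absorb both estimates.

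Finally, \eqref{volest} is immediate: rewriting \eqref{eq:flow} as $\log\frac{(\omega+\ddbar\vp)^n}{\omega^n}=\dot{\vp}-F(\vp,z)$ and using the bounds just proved gives $\left|\log\frac{(\omega+\ddbar\vp)^n}{\omega^n}\right|\le|\dot{\vp}|+|F(\vp,z)|\le C+A$, after which we enlarge $C$ one last time. The only genuinely delicate point is the bootstrap in the first step: the constant $A$ governing the growth of $\vp$ is itself defined in terms of an a priori bound on $\vp$, so one must be careful to run the continuity argument on the set of times where $|\vp|\le K_0+1$ and only then conclude; I expect this, together with the (standard) use of the lemma on time-derivatives of spatial suprema, to be the main thing requiring care, though neither presents a real obstacle.
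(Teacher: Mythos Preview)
Your proof is correct and follows essentially the same approach as the paper: bound $\vp$ by the maximum principle using the sign of the log term at spatial extrema, then differentiate the flow and apply the maximum principle (Gr\"onwall) to $\dot{\vp}$. The only cosmetic difference is that the paper packages the first step as a comparison with the ODEs $dM_t/dt=\overline{F}(M_t)$, $dm_t/dt=\underline{F}(m_t)$ (where $\overline{F},\underline{F}$ are the sup and inf of $F$ over $M$), which sidesteps your explicit continuity bootstrap on the set $\{|\vp|\le K_0+1\}$.
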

\begin{proof}
For all $s$ let us define 
\[\begin{aligned}
	\overline{F}(s)&=\sup_{z\in M} F(s,z), \\
	\underline{F}(s)&=\inf_{z\in M} F(s,z),
\end{aligned} \]
which are continuous functions. If $M_t = \sup \vp(t,\cdot)$ and $m_t = \inf \vp(t,\cdot)$ then we obtain
\[\begin{aligned}
	\frac{dM_t}{dt}&\leqslant \overline{F}(M_t),\\
	\frac{dm_t}{dt}&\geqslant \underline{F}(m_t).
\end{aligned}\]
Comparing with the 
corresponding ODEs, we find that there exist $T, C>0$ depending only on
$m_0, M_0$ such that as long as our solution exists, and $t\leqslant T$, we have $\sup
|\vp(t)| < C$. 

Differentiating the equation we obtain
\begin{equation}\label{phidot}
\frac{\partial\dot{\vp}}{\partial t} = \Delta_\vp \dot{\vp} +
F'(\vp,z)\dot{\vp},
\end{equation}
where $F'$ is the derivative of $F$ with respect to the $\vp$ variable.
Since $F'(\vp,z)$ is bounded as long as $\vp$ is bounded, from the
maximum principle we get
\begin{equation}\label{eq:phidot}
	\sup|\dot{\vp}(t)| < \sup|\dot{\vp}(0)|e^{\kappa t},
\end{equation}
where $\kappa$ depends on $F$ and $\sup|\vp(0)|$. Hence for our choice
of $T$, we get
\[ \sup|\dot{\vp}(t)| < C,\]
for $t \leqslant T$, where $C$ depends on $\sup |\vp_0|$ and $\sup
|\dot{\vp}_0|$. 
\end{proof}

In the lemmas below $T$ will be the same as in the previous lemma.
\begin{lemma}\label{lem:c1} There exists $C>0$ depending on
	$\sup|\vp_0|$ and $\sup|\dot{\vp}_0|$ such that
	\begin{equation}\label{first} |\nabla\vp(t)|^2_\omega < e^{C/t},
\end{equation}
	as long as the solution exists and $t\leqslant T$ for the $T$ in Lemma
	\ref{lem:c0}.
\end{lemma}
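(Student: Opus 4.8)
The plan is to derive the gradient estimate \eqref{first} via a Bochner-type maximum principle argument, following the approach of B\l ocki for the elliptic equation, adapted to the parabolic setting. Introduce the quantity $H = t\log|\nabla\vp|^2_\omega - A\vp$ (or a similar variant with a carefully chosen constant $A>0$ and the factor $t$ inserted to kill the singularity at $t=0$), and compute $\left(\frac{\partial}{\partial t}-\Delta_\vp\right)H$ at an interior maximum point $(t_0,z_0)$ with $t_0>0$. The term $t\cdot\Delta_\vp\log|\nabla\vp|^2_\omega$ produces, via the standard Bochner formula for $\omega_\vp=\omega+\ddbar\vp$, a good negative term of the form $-\frac{t\,|\nabla\nabla\vp|^2}{|\nabla\vp|^2}$ together with curvature terms of $\omega$ (bounded) and a crucial cross term coming from differentiating the right-hand side of \eqref{eq:flow}.

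The key algebraic point, exactly as in B\l ocki's argument, is that the ``bad'' first-order terms arising from $\nabla\bigl(\log\frac{\omega_\vp^n}{\omega^n}+F(\vp,z)\bigr)$ --- which a priori involve $\mathrm{tr}_{\omega_\vp}\omega$ and the full Hessian of $\vp$ --- can be absorbed. Specifically, the gradient of the equation gives $\nabla\dot\vp = \Delta_\vp$-type expressions paired against $\nabla\vp$, and the terms involving $\nabla F$ contribute only $|\nabla\vp|^2$ and $|\nabla\vp|$ (using that $F$ is smooth and $\vp$ is bounded by Lemma \ref{lem:c0}), while the term $\partial_t(t\log|\nabla\vp|^2) = \log|\nabla\vp|^2 + t\,\frac{\partial_t|\nabla\vp|^2}{|\nabla\vp|^2}$ supplies the extra $\log|\nabla\vp|^2$ that ultimately forces a bound of the stated exponential type. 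Choosing $A$ large enough that $-A\Delta_\vp\vp = A(\mathrm{tr}_{\omega_\vp}\omega - n)$ dominates the remaining $\mathrm{tr}_{\omega_\vp}\omega$ terms (here one uses $\mathrm{tr}_{\omega_\vp}\omega \geq n\bigl(\frac{\omega^n}{\omega_\vp^n}\bigr)^{1/n}$, which is bounded below by a positive constant thanks to \eqref{volest}), we conclude that at $(t_0,z_0)$ either $t_0$ is small or $|\nabla\vp|^2_\omega(t_0,z_0)$ is controlled. Feeding this back into the definition of $H$ and using $H(t_0,z_0)\geq H(t,z)$ for all $t\leq t_0$ yields $t\log|\nabla\vp|^2_\omega \leq C + A\,\mathrm{osc}\,\vp \leq C'$, which is \eqref{first}.

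I expect the main obstacle to be bookkeeping the cross terms between $t\,\Delta_\vp\log|\nabla\vp|^2$ and the spatial derivatives of $\log\frac{\omega_\vp^n}{\omega^n}$: unlike the elliptic case, differentiating the equation does not directly produce $\mathrm{tr}_{\omega_\vp}(\text{Hess}\,\vp)$ in a clean form, and one has to track how $\nabla\dot\vp$ enters --- at the maximum point $\nabla H = 0$ forces $\frac{t\,\nabla|\nabla\vp|^2}{|\nabla\vp|^2} = A\nabla\vp$, which is precisely what lets one trade the dangerous third-order term against $A^2|\nabla\vp|^2$. Getting the signs and the constant $A$ to cooperate, while carrying the explicit $t$-dependence so that the final bound is genuinely $e^{C/t}$ with $C$ depending only on $\sup|\vp_0|$ and $\sup|\dot\vp_0|$ (via \eqref{zero} and \eqref{volest}), is the delicate part; the rest is the now-routine parabolic maximum principle.
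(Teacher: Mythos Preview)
Your overall strategy is right, but there is a real gap in the choice of test function. With the \emph{linear} barrier $H=t\log\beta-A\vp$ (where $\beta=|\nabla\vp|^2_\omega$), the argument does not close. At an interior maximum, after substituting $t\nabla\log\beta=A\nabla\vp$ into the heat-operator inequality, the bad term you obtain is $t^{-1}A^2\sum_p\frac{|\vp_p|^2}{1+\vp_{p\bar p}}$, i.e.\ $t^{-1}A^2|\nabla\vp|^2_{g_\vp}$, not $A^2|\nabla\vp|^2_\omega$. B\l ocki's trick lets the good third-order term $-\frac{t}{\beta}\sum\frac{|\vp_{jp}|^2}{1+\vp_{p\bar p}}$ absorb this, leaving you with
\[
0\ \le\ -(A-Bt)\,\mathrm{tr}_{g_\vp}g+\log\beta+C.
\]
From this and the volume bound \eqref{volest} you can deduce $\mathrm{tr}_{g_\vp}g\le C\log\beta$ and hence $1+\vp_{p\bar p}\le C(\log\beta)^{n-1}$, but you have no control on $\sum_p\frac{|\vp_p|^2}{1+\vp_{p\bar p}}$, so there is no way to pass from these to a bound on $\beta=\sum_p|\vp_p|^2$. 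The Aubin--Yau mechanism you invoke (``$\mathrm{tr}_{\omega_\vp}\omega$ is bounded below'') does not help here: unlike the $C^2$ case, $\beta$ and $\mathrm{tr}_{g_\vp}g$ are not algebraically related via the volume form.

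The fix, and this is precisely B\l ocki's point, is to replace $A\vp$ by a \emph{concave} function $\gamma(\vp)$; the paper takes $\gamma(s)=As-\tfrac{1}{A}s^2$. Then $(\partial_t-\Delta_\vp)(-\gamma(\vp))$ produces an additional term $\gamma''\sum_p\frac{|\vp_p|^2}{1+\vp_{p\bar p}}$ with $\gamma''=-\tfrac{2}{A}<0$, and the maximum-principle inequality now yields simultaneously
\[
\sum_p\frac{1}{1+\vp_{p\bar p}}\le C'\log\beta
\quad\text{and}\quad
\sum_p\frac{|\vp_p|^2}{1+\vp_{p\bar p}}\le C'\log\beta.
\]
Combining these with $\prod_p(1+\vp_{p\bar p})\le C$ gives $\beta\le C(C'\log\beta)^n$, hence $\beta\le C$ at the maximum, and the bound $t\log\beta\le C$ follows. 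Everything else in your outline (the factor of $t$, the dependence only on $\sup|\vp_0|$ and $\sup|\dot\vp_0|$ via Lemma~\ref{lem:c0}) is correct.
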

\begin{proof}
	We modify B\l ocki's estimate~\cite{Bl09} for the complex
	Monge-Amp\`ere equation (cfr. \cite{Han}). Define
	\[ K = t\log |\nabla\vp|^2_\omega - \gamma(\vp),\]
	where $\gamma$ will be chosen later. Suppose that
	$\sup_{(0,t]\times M}K = K(t,z)$ is achieved. Pick normal coordinates for
	$\omega$ at $z$, such that $\vp_{i\bar j}$ is diagonal at this
	point (here and henceforth, indices will denote covariant derivatives with respect to the metric $\omega$). Let us write $\beta = |\nabla\vp|^2_\omega$ and $\Delta_\vp$ for the Laplacian of the metric $\omega+\ddbar\vp$. There
	exists $B>0$ such that
	\[\begin{gathered}
		0 \leqslant \left(\frac{\partial}{\partial t} -
		\Delta_\vp\right)K \leqslant -\frac{t}{\beta}\sum_{i,p}\frac{
		|\vp_{ip}|^2 + |\vp_{i\bar p}|^2}{1 + \vp_{p\bar p}}
		+ (t^{-1}(\gamma')^2 + \gamma'')\sum_{p}\frac{
		|\vp_p|^2}{1+\vp_{p\bar p}} \\
		- (\gamma' -Bt)\sum_{p}\frac{1}{1+\vp_{p\bar p}} +
		\log\beta + \frac{Ct}{\beta} -\gamma'\dot{\vp} +
		n\gamma' + Ct.
	\end{gathered}\]
	The constant $C$ depends on bounds for $F$ and $F'$, and also we
	used that $\nabla K=0$ at $(t,z)$. 

	Now we apply B\l ocki's trick to get rid of the term containing
	$(\gamma')^2$. At $(t,z)$ we have
	\[ t\beta_p = \gamma'\beta\vp_p,\]
	where
	\[ \beta_p = \vp_p\vp_{p\bar p} + \sum_j \vp_{jp}\vp_{\bar
	j},\]
	remembering that $\vp_{j\bar p}$ is diagonal. It follows that 
	\[ \sum_j \vp_{jp}\vp_{\bar j} = (t^{-1}\gamma'\beta -
	\vp_{p\bar p})\vp_p,\]
	and so
	\[ \begin{aligned}
		\frac{t}{\beta}\sum_{j,p}\frac{|\vp_{jp}|^2}{1+\vp_{p\bar
		p}} &\geqslant \frac{t}{\beta^2}\sum_p\frac{\left|\sum_j
		\vp_{jp}\vp_{\bar j}\right|^2}{1+\vp_{p\bar p}}\\
		&= \frac{t}{\beta^2}\sum_p \frac{|t^{-1}\gamma'\beta -
		\vp_{p\bar p}|^2|\vp_p|^2}{1+\vp_{p\bar p}} \\
		&\geqslant
		t^{-1}(\gamma')^2\sum_p\frac{|\vp_p|^2}{1+\vp_{p\bar p}} -
		2\gamma',
	\end{aligned}\]
	where we assume that $\gamma'>0$. 
	Also from Lemma \ref{lem:c0} we know that $\dot{\vp}$ is bounded.
	Combining these estimates we obtain 
	\[ 0\leqslant \gamma''\sum_p\frac{ |\vp_p|^2}{1+\vp_{p\bar p}}
	- (\gamma'-Bt)\sum_p\frac{1}{1+\vp_{p\bar p}} + \log\beta +
	\frac{Ct}{\beta} + C\gamma' + Ct.\]
	We now choose $\gamma(s) = As - \frac{1}{A}s^2$. 
	We can assume that $\log\beta > 1$ at $(t,z)$, so in particular
	$\frac{t}{\beta}$ is bounded above as long as $t<T$. Then if $A$
	is chosen sufficiently large, we get a constant $C'>0$ such that
	\begin{equation}\label{eq:1}
		\sum_p\frac{1}{1+\vp_{p\bar p}} + \sum_p\frac{|\vp_p|^2}
		{1+\vp_{p\bar p}} \leqslant C'\log\beta,
	\end{equation}
	so in particular $(1+\vp_{p\bar p})^{-1}\leqslant C'\log\beta$ for
	each $p$. 
	From \eqref{volest} we know that 
	\[ \prod_p (1+\vp_{p\bar p}) < C,\] 
	so
	\[ 1+\vp_{p\bar p} \leqslant C(C'\log\beta)^{n-1}, \]
	and using (\ref{eq:1}) we get
	\[ \beta = \sum_p |\vp_p|^2 \leqslant C(C'\log \beta)^n.\]
	This shows that $\beta< C$ and in turn $K< C$
	for some constant $C$. So either $K$ achieves a maximum for some
	$t>0$ in which case we have just bounded it, or it achieves its
	maximum for $t=0$, which is bounded in terms of $\sup
	|\vp_0|$.
\end{proof}

From now on, let us write $g$ for the metric $\omega$ and $g_\vp$ for the
	metric $\omega+\ddbar\vp$.

\begin{lemma}\label{lem:c2} There exists $C>0$ depending on
	$\sup|\vp_0|$ and $\sup|\dot{\vp}_0|$ such that
\begin{equation}\label{second}	
 0<\mathrm{tr}_g(g_\vp)=n+\Delta_g \vp(t) < e^{Ce^{C/t}},
\end{equation}
	as long as the solution exists and $t \leqslant T$, for the $T$ from
	Lemma \ref{lem:c0}.
\end{lemma}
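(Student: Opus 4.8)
The plan is to adapt the parabolic Aubin–Yau second-order estimate (as in \cite{ST09}) to this setting, the main point being to absorb the extra terms coming from the potential $F(\vp,z)$ using the bounds already established.

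First I would compute the evolution of the quantity $u = \log\mathrm{tr}_g(g_\vp)$, or more precisely of $u - A\vp$ for a large constant $A$ to be chosen, applying the heat operator $\left(\frac{\partial}{\partial t} - \Delta_\vp\right)$. The standard computation (Aubin–Yau, in the parabolic form) gives, at an interior maximum of $u - A\vp$,
\[
\left(\frac{\partial}{\partial t} - \Delta_\vp\right)(u - A\vp) \leqslant C_1\,\mathrm{tr}_{g_\vp}(g) - A\,\mathrm{tr}_{g_\vp}(g_\vp) + A\,\mathrm{tr}_{g_\vp}(g) + (\text{lower order}),
\]
where $C_1$ bounds the holomorphic bisectional curvature of $\omega$, and the new feature is that differentiating $F(\vp,z)$ twice produces terms involving $F_{i\bar j}$, $F'\vp_{i\bar j}$, $F'\vp_i\vp_{\bar j}$ and $|F'|^2$; the first is bounded, the term $F'\,\mathrm{tr}_g(g_\vp)$ is controlled once we know $F'$ is bounded (from Lemma \ref{lem:c0}, $\vp$ is bounded), and the gradient term $F'\vp_i$ is handled using the $C^1$ bound from Lemma \ref{lem:c1}. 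Choosing $A$ large enough that $A - C_1 \geqslant 1$, the dominant term becomes $-\mathrm{tr}_{g_\vp}(g)$, and then one uses the elementary inequality relating $\mathrm{tr}_g(g_\vp)$, $\mathrm{tr}_{g_\vp}(g)$ and the (bounded, by \eqref{volest}) volume ratio
\[
\mathrm{tr}_{g_\vp}(g) \geqslant \left(\mathrm{tr}_g(g_\vp)\right)^{1/(n-1)}\left(\frac{\omega^n}{(\omega+\ddbar\vp)^n}\right)^{1/(n-1)}
\]
to conclude that at the maximum point $\mathrm{tr}_g(g_\vp)$ is bounded.

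The role of the time factor is the one subtlety. Unlike Lemma \ref{lem:c1} I would not expect to need an explicit factor of $t$ in front of $u$, because the parabolic maximum principle already forces the maximum of $u - A\vp$ over $[0,T]\times M$ to be attained either at a positive time — where the above argument applies and gives a bound depending only on $\sup|\vp_0|$, $\sup|\dot\vp_0|$ through the constants of Lemmas \ref{lem:c0} and \ref{lem:c1} — or at $t=0$. But the value at $t=0$ depends on $\Delta_g\vp_0$, which we are \emph{not} allowed to use. To get the stated bound of the form $e^{Ce^{C/t}}$, which blows up as $t\to 0$, one therefore inserts a weight: consider $t\cdot u - A\vp$ (or $u - A\vp$ with a cutoff in time), so that the $t=0$ boundary contribution vanishes, at the cost of an extra term $u = \log\mathrm{tr}_g(g_\vp)$ appearing from $\frac{\partial}{\partial t}(t u)$; this logarithmic term is harmless since it is dominated by the negative term $-t\,\mathrm{tr}_{g_\vp}(g)$ once $\mathrm{tr}_g(g_\vp)$ is large. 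Tracking how the constants compound — $C^1$ enters as $e^{C/t}$, and then the exponentiation in the trace inequality turns this into $e^{Ce^{C/t}}$ — gives the claimed form.

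The main obstacle I anticipate is precisely the careful bookkeeping of the potential terms together with the time weight: one must check that after multiplying by $t$, the new term from $\frac{\partial\vp}{\partial t}$ in $\Delta_\vp(t\,u) $ interacting with $F$, as well as the gradient term $t^{-1}$-type contributions analogous to those in Lemma \ref{lem:c1}, do not overwhelm the good negative term. Concretely, the term $\gamma'\dot\vp$-type contributions are fine by Lemma \ref{lem:c0}, and the dangerous $|\nabla\vp|^2$ contributions are fine by Lemma \ref{lem:c1}, but one has to make sure the $t$-dependence of the final constant is exactly absorbed into the stated double-exponential rather than something worse. Everything else — the Aubin–Yau computation itself, the choice of $A$, the final algebraic inequality — is routine.
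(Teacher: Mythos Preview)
Your plan is essentially the paper's, and it is correct in outline: time-weight the Aubin--Yau quantity so that the $t=0$ contribution does not involve $\Delta_g\vp_0$, control the new terms from $\Delta_g F(\vp,z)$ using Lemmas~\ref{lem:c0} and~\ref{lem:c1}, and finish with the standard $\mathrm{tr}_g(g_\vp)\leqslant C(\mathrm{tr}_{g_\vp}g)^{n-1}$ inequality.

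The one substantive difference is the choice of weight. The paper does \emph{not} use $t\cdot\log\mathrm{tr}_g(g_\vp)$; it uses
\[
H=e^{-\alpha/t}\log\mathrm{tr}_g(g_\vp)-A\vp,
\]
where $\alpha$ is precisely the constant $C$ from Lemma~\ref{lem:c1}. The point is that then $e^{-\alpha/t}|\nabla\vp|^2_g<1$, so the dangerous term $\dfrac{e^{-\alpha/t}F''|\nabla\vp|^2_g}{\mathrm{tr}_g(g_\vp)}$ coming from $\Delta_g F(\vp,z)$ is bounded outright. This is exactly the ``careful bookkeeping'' you flagged as the main obstacle, and the paper's weight sidesteps it entirely; after that the argument is the routine one you describe, and unwinding $e^{-\alpha/t}\log\mathrm{tr}_g(g_\vp)\leqslant C$ gives the double exponential in the statement.

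Your weight $t$ also works, but you are a little too quick when you say the $|\nabla\vp|^2$ terms are ``fine by Lemma~\ref{lem:c1}'': with weight $t$ the relevant quantity is $t\,|\nabla\vp|^2_g$, which can be as large as $t\,e^{C/t}\to\infty$, so this is not directly bounded. What saves you is that the term appears as $\dfrac{t\,|\nabla\vp|^2_g}{\mathrm{tr}_g(g_\vp)}$; splitting into the cases $\mathrm{tr}_g(g_\vp)\geqslant t\,e^{C/t}$ and $\mathrm{tr}_g(g_\vp)<t\,e^{C/t}$ at the interior maximum closes the argument (and, incidentally, yields the sharper bound $\mathrm{tr}_g(g_\vp)\leqslant e^{C/t}$ rather than the double exponential you anticipated). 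So the paper's weight buys cleanliness; yours buys a slightly better constant at the cost of the extra case analysis you did not spell out.
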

\begin{proof}
	We let
	\[ H = e^{-\frac{\alpha}{t}}\log \mathrm{tr}_g(g_\vp) -
	A\vp,\]
	where $\alpha=C$ from Lemma \ref{lem:c1} and $A$ is chosen
	later. In particular we will use that
	$e^{-\alpha/t}|\nabla\vp|^2_g < 1$.
	Standard calculations (from Aubin and Yau~\cite{Aub,Yau78})
	show that there exist $B>0$ such that
	\[ \Delta_\vp \log\mathrm{tr}_g(g_\vp) \geqslant
	-B\mathrm{tr}_{g_\vp}g -
	\frac{\mathrm{tr}_g\mathrm{Ric}(g_\vp)}{\mathrm{tr}_g(g_\vp)}.\]
	Using this we can compute
	\begin{equation}\label{eq:2}
		\begin{aligned}
		\left(\frac{\partial}{\partial t} - \Delta_\vp\right)H
		&\leqslant
		\frac{\alpha e^{-\alpha/t}}{t^2}\log\mathrm{tr}_g
		(g_\vp) + \frac{Ce^{-\alpha/t}}{\mathrm{tr}_g(g_\vp)}+
		\frac{e^{-\alpha/t}\Delta_g
		F(\vp,z)}{\mathrm{tr}_g(g_\vp)} \\
		&\quad +Be^{-\alpha/t}\mathrm{tr}_{g_\vp}g -
		A\dot{\vp} + An - A\mathrm{tr}_{g_\vp}g.
		\end{aligned}
	\end{equation}
	Here
	\[ \Delta_g F(\vp,z) = \Delta_g F + 2\mathrm{Re}(g^{i\bar
	j}F'_i\vp_{\bar j}) + F'\Delta_g\vp+F''|\nabla\vp|^2_g,\]
	where $F'$ is the derivative in the $\vp$ variable, and
	$\Delta_g F$ is the Laplacian of $F(\vp,z)$ in the $z$
	variable. So we have constants $C_1,C_2,C_3$ such that
	\[ \Delta_g F(\vp,z)\leqslant C_1 + C_2|\nabla\vp|^2_g +
	C_3\mathrm{tr}_g(g_\vp).\]
	From \eqref{volest} we have bounds on above and below on
	$\frac{\det g_{\vp}}{\det g}$, so for some constant $C$ we have
	$\mathrm{tr}_g(g_\vp) > C^{-1}$ and also
	$\mathrm{tr}_g(g_\vp)\leqslant C(\mathrm{tr}_{g_\vp}g)^{n-1}$.
	Using these in (\ref{eq:2}) we get
	\[
	\begin{aligned}
		\left(\frac{\partial}{\partial t} - \Delta_\vp\right)H
	&\leqslant -(A-Be^{-\alpha/t})\mathrm{tr}_{g_\vp}g +
	C\log\mathrm{tr}_{g_\vp}g + C \\
	&\leqslant -(A - C- Be^{-\alpha/t})\mathrm{tr}_{g_\vp}g + C',
	\end{aligned}\]
	as long as $t \leqslant T$. Choosing $A$ large enough, we can use the
	maximum principle to bound $H$ in terms of its value for $t=0$,
	which is bounded by $\sup|\vp_0|$. 
\end{proof}

We note here that if one is interested in the special case of weak K\"ahler-Einstein currents (i.e. $F=\vp-h_\omega$), then the gradient estimate in Lemma \ref{lem:c1} is not needed.
We now describe how to get the higher order estimates, as long as the solution exists and $t \leqslant T$, 
for the $T$ from
	Lemma \ref{lem:c0}.
As in \cite{Yau78} we let $\vp_{i\ov{j}k}$ be the third covariant derivative of $\vp$ with respect to the Levi-Civita connection of $\omega$, and we define
$$S=g_\vp^{i\ov{p}}g_\vp^{q\ov{j}}g_\vp^{k\ov{r}} \vp_{i\ov{j}k}\vp_{\ov{p}q\ov{r}}.$$
From now on, we will denote by $C(t)$ a smooth real function defined on $(0,T]$, which is allowed to blow up when $t$ approaches zero, which depends only on $\sup |\vp_0|,
	\sup|\dot{\vp}_0|$ and which may vary from line to line. These functions $C(t)$ can be made completely explicit.
Using \eqref{second} it is clear that an estimate of the form $S\leq C(t)$
implies an estimate of the form $\|\vp(t)\|_{C^{2+\alpha}(g)}\leq C(t)$, for any $0<\alpha<1$.
To estimate $S$ we first compute its evolution. It is convenient to use the general computation by Phong-\v Se\v sum-Sturm \cite{PSS}, which uses the following notation.
We denote by $h^i_j=g^{i\ov{k}}(g_{j\ov{k}}+\vp_{j\ov{k}})$, which is an endomorphism of the tangent bundle. Then 
$S$ can be written in terms of the connection $\nabla h h^{-1}$ as
$$S=g_\vp^{p\ov{q}}g_{\vp, i\ov{j}}g_\vp^{k\ov{\ell}} (\nabla_p h h^{-1})^i_k \ov{(\nabla_q h h^{-1})^j_\ell}=|\nabla h h^{-1}|^2_{g_\vp},$$
where $\nabla$ is the Levi-Civita connection of $\omega_\vp$. Then the computations in \cite{PSS} yield
\[
\begin{aligned}
		\left(\frac{\partial}{\partial t} - \Delta_\vp\right)S
		&=-|\nabla(\nabla h h^{-1})|^2_{g_\vp}-|\ov{\nabla}(\nabla h h^{-1})|^2_{g_\vp}\\
&\quad + 2\mathrm{Re} \left\langle (\nabla T - \nabla R ,\nabla h h^{-1}\right\rangle_{g_\vp}\\
&\quad +(\nabla_p h h^{-1})^i_k \ov{(\nabla_q h h^{-1})^j_\ell}(T^{p\ov{q}}g_{\vp, i\ov{j}}g_\vp^{k\ov{\ell}}-g_\vp^{p\ov{q}}T_{i\ov{j}}g_\vp^{k\ov{\ell}}+g_\vp^{p\ov{q}}g_{\vp, i\ov{j}}T^{k\ov{\ell}}),
		\end{aligned}\]
where $T_{i\ov{j}}=-\left(\frac{\partial}{\partial t}g_{\vp}+\mathrm{Ric}(g_\vp)\right)_{i\ov{j}},$ $(\nabla T)^p_{qr}=g_\vp^{p\ov{s}}\nabla_q T_{r\ov{s}}$, $(\nabla R)^p_{qr}=g_\vp^{s\ov{t}}\nabla_s R^p_{rq\ov{t}}$ and $R^p_{rq\ov{t}}$ is the curvature of the fixed metric $g$. Along the standard K\"ahler-Ricci flow the tensor $T$ vanishes, while in our case differentiating \eqref{eq:flow} we get
\begin{equation}\label{tensor}
-T_{i\ov{j}}=\mathrm{Ric}(g)_{i\ov{j}}+F''\vp_i\vp_{\ov{j}} +F'\vp_{i\ov{j}}+F_{i\ov{j}}+2\mathrm{Re}(F'_i\vp_{\ov{j}}).
\end{equation}
Using \eqref{first} and \eqref{second} we can then estimate
$$|(\nabla_p h h^{-1})^i_k \ov{(\nabla_q h h^{-1})^j_\ell}(T^{p\ov{q}}g_{\vp, i\ov{j}}g_\vp^{k\ov{\ell}}-g_\vp^{p\ov{q}}T_{i\ov{j}}g_\vp^{k\ov{\ell}}+g_\vp^{p\ov{q}}g_{\vp, i\ov{j}}T^{k\ov{\ell}})|\leq C(t) S.$$
The term $2\mathrm{Re}\langle\nabla R ,\nabla h h^{-1}\rangle_{g_\vp}$ is comparable to $S$, but bounding $2\mathrm{Re}\langle\nabla T ,\nabla h h^{-1}\rangle_{g_\vp}$ requires a bit more work.
Differentiating \eqref{tensor} and using \eqref{zero}, \eqref{first} and \eqref{second} we see that all the terms in $2\mathrm{Re}\langle\nabla T ,\nabla h h^{-1}\rangle_{g_\vp}$ are comparable to $C(t) S$ except for two terms of the form
$$\langle \vp_{ij} g_\vp^{k\ov{\ell}}\vp_{\ov{\ell}}, (\nabla_i h h^{-1})^k_j\rangle_{g_{\vp}}.$$
We bound these by $|\vp_{ij}|^2_{g_{\vp}} + C(t)S,$ so overall we get
$$\left(\frac{\partial}{\partial t} - \Delta_\vp\right)S\leq C(t)S +|\vp_{ij}|^2_{g_{\vp}}+C.$$
The term $C(t)S$ can be controlled by using $\mathrm{tr}_g(g_\vp)$ in the usual way (cfr. \cite{PSS}). For the term $|\vp_{ij}|^2_{g_{\vp}}$ we note that using \eqref{zero}, \eqref{first} and \eqref{second} we have
\begin{equation*}
\begin{aligned}\left(\frac{\partial}{\partial t} - \Delta_\vp\right)|\nabla\vp|^2_g&\leq
-\sum_{i,p}\frac{
		|\vp_{ip}|^2 + |\vp_{i\bar p}|^2}{1 + \vp_{p\bar p}}+2\mathrm{Re}\langle \nabla\vp,
		F'\nabla\vp+\nabla F\rangle_g +C\mathrm{tr}_{g_\vp}g|\nabla\vp|^2_g\\
		&\leq -\frac{|\vp_{ij}|^2_{g_{\vp}}}{C(t)}+C(t).
	\end{aligned}
	\end{equation*}
We can then apply the maximum principle to the quantity
$$G=\frac{S}{C_1(t)}+\frac{\mathrm{tr}_g(g_\vp)}{C_2(t)}+\frac{|\nabla\vp|^2_g}{C_3(t)},$$
for suitable functions $C_i(t)$ that depend only on the given data, and get
$G\leq C$, which implies the desired estimate for $S$. This means that as long as the solution exists and $0<t\leqslant T$ we have a bound on $\Vert\vp(t)\Vert_{C^{2+\alpha}(M)}$.
Since by standard parabolic theory one can start the flow with initial data in $C^{2+\alpha}$,
this shows that the flow has a $C^{2+\alpha}$ solution defined on $[0,T]$.

The next step is to estimate $\sup|\ddot{\vp}(t)|$ and $\sup|\partial_i\partial_{\ov{j}}\dot{\vp}(t)|$. 
It is easy to see that both of these quantities are bounded if we bound $|\mathrm{Ric}(g_\vp)|_{g_\vp}$. Following the computation in \cite[(6.31)]{PSSW} one can derive the following estimate (there are essentially no new bad terms in this case)
$$\left(\frac{\partial}{\partial t} - \Delta_\vp\right)|\mathrm{Ric}(g_\vp)|_{g_\vp}\leq C(t)|\mathrm{Rm}(g_\vp)|^2+C(t).$$
From one of the two good positive terms in the evolution of $S$ we get
$$\left(\frac{\partial}{\partial t} - \Delta_\vp\right)S\leq -\frac{|\mathrm{Rm}(g_\vp)|^2}{C(t)}+C(t)$$
and so the maximum principle applied to the quantity $\frac{|\mathrm{Ric}(g_\vp)|_{g_\vp}}{C_1(t)}+\frac{S}{C_2(t)}$ gives the desired bound $|\mathrm{Ric}(g_\vp)|_{g_\vp}\leq C(t)$.

It now follows from the parabolic Schauder estimates applied to (\ref{phidot}) that we have bounds for $\vp$ in the parabolic H\"older space $C^{2+\alpha, 1+\alpha/2}(M\times[\varepsilon,T])$ for any $\varepsilon>0$, with the bounds only depending on $\varepsilon$, 
$\sup|\vp_0|$ and $\sup|\dot{\vp}_0|$.
By the parabolic Schauder estimates we then also get bounds on all higher order derivatives for $\vp$, and letting $\varepsilon\to 0$ we get the required bounds on $\vp(t)$ that blow up as $t$ goes to zero. In particular, we get a smooth solution $\vp(t)$ that exists on $[0,T]$, with bounds as in \eqref{eq:bounds}. This completes the proof of Proposition \ref{prop:flow}.

\section{Proof of Theorem \ref{thm:main}}\label{sec:main}
Suppose that $\vp$ is a bounded $\omega$-plurisubharmonic solution of the
equation
\begin{equation}\label{eq:MA}
	(\omega + \ddbar\vp)^n = e^{-F(\vp,z)}\omega^n,
\end{equation}
where $F$ is a smooth function.
First of all we want to prove existence of the flow \eqref{eq:flow} with rough initial data 
$\vp$. For this, we follow the proof of Song-Tian \cite{ST09} in the case of K\"ahler-Ricci flow.

It follows from Ko\l odziej~\cite{Kol98}
that in this case $\vp$ is continuous (in fact it is even $C^\alpha$ \cite{GKZ, Kol08}). 
Let us approximate $\vp$ with a sequence of smooth
functions $u_k$, such that
\begin{equation}\label{eq:approx}
	\sup_M |\vp - u_k| \to 0,
\end{equation}
as $k\to\infty$. By Yau's theorem~\cite{Yau78} there are smooth
functions $\psi_k$ such that
\begin{equation}\label{monge} (\omega + \ddbar\psi_k)^n = c_k e^{-F(u_k,z)}\omega^n,
\end{equation}
where the positive constants $c_k$ are chosen so that the integrals of both sides of \eqref{monge} match. When $k$ is large we see that $c_k$ approaches $1$.
Moreover, we can normalise the solution $\psi_k$ so that
\[ \sup_M (\psi_k - \vp) = \sup_M (\vp - \psi_k).\]
Using \eqref{eq:approx} together with Ko\l odziej's stability result~\cite{Kol03} we obtain 
\begin{equation} \label{eq:stability}
	\lim_{k\to\infty} \Vert \psi_k -\vp\Vert_{L^\infty} = 0.
\end{equation}
Using Proposition \ref{prop:flow} we can solve the equation
\begin{equation}\label{eq:flow1}
	\frac{\partial \vp_k}{\partial t} = \log\frac{ (\omega+\ddbar
\vp_k)^n}{\omega^n} + F(\vp_k,z)-\log c_k,
\end{equation}
with initial condition $\vp_k|_{t=0}=\psi_k$ for a short time $t\in [0,T]$
independent
of $k$, since by \eqref{eq:approx}, \eqref{monge} and (\ref{eq:stability}) we have
uniform bounds on the initial data
$\sup|\psi_k|$ and $\sup|\dot{\vp}_k(0)|$. As in ~\cite{ST09} we have

\begin{lemma}
	The sequence $\vp_k$ is a Cauchy sequence in $C^0([0,T]\times
	M)$, ie. 
	\[ \lim_{j,k\to\infty} \Vert \vp_j -
	\vp_k\Vert_{L^\infty([0,T]\times M)} = 0.\]
\end{lemma}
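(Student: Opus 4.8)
The plan is to show the difference $\vp_j - \vp_k$ satisfies a parabolic inequality and then use the maximum principle, exactly as in Song--Tian. First I would subtract the two equations \eqref{eq:flow1} for indices $j$ and $k$. Writing $w = \vp_j - \vp_k$, the subtraction gives
\[
\frac{\partial w}{\partial t} = \log\frac{(\omega+\ddbar\vp_j)^n}{(\omega+\ddbar\vp_k)^n} + F(\vp_j,z) - F(\vp_k,z) - \log\frac{c_j}{c_k}.
\]
The first term on the right can be rewritten, via the concavity/convexity trick (integrating $\frac{d}{ds}\log(\omega+\ddbar(\vp_k + sw))^n$ from $s=0$ to $s=1$), as $\Delta_{\tilde g} w$ where $\tilde g$ is the metric $\int_0^1 (\omega + \ddbar(\vp_k + sw))\,ds$, which is uniformly positive definite for $t\in[\epsilon,T]$ by the $C^2$ bounds of Proposition~\ref{prop:flow} (though possibly degenerating as $t\to 0$). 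The term $F(\vp_j,z) - F(\vp_k,z)$ is bounded by $(\sup|F'|)\,|w|$ since the $\vp_k$ are uniformly bounded, and $\log(c_j/c_k)\to 0$ as $j,k\to\infty$ by \eqref{monge} and \eqref{eq:approx}. Thus $w$ satisfies $(\partial_t - \Delta_{\tilde g})w \leq C|w| + \eta_{j,k}$ with $\eta_{j,k}\to 0$.

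Next I would apply the maximum principle to $e^{-Ct}|w|$ (or compare with the ODE $\dot{a} = Ca + \eta_{j,k}$) to conclude
\[
\sup_{M}|w(t,\cdot)| \leq e^{Ct}\Big(\sup_M |w(0,\cdot)| + \eta_{j,k}\, t\Big) \leq e^{CT}\big(\|\psi_j - \psi_k\|_{L^\infty} + T\eta_{j,k}\big).
\]
By \eqref{eq:stability} the initial term $\|\psi_j - \psi_k\|_{L^\infty} \leq \|\psi_j - \vp\|_{L^\infty} + \|\vp - \psi_k\|_{L^\infty} \to 0$, and $\eta_{j,k}\to 0$, so the right-hand side tends to $0$ as $j,k\to\infty$, uniformly in $t\in[0,T]$. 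This gives the desired Cauchy property in $C^0([0,T]\times M)$.

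The one point requiring care is that $\tilde g$ may degenerate as $t\to 0$, so the maximum principle argument must be run carefully near $t=0$; but this is not actually an obstacle, because the key inequality $(\partial_t-\Delta_{\tilde g})w \leq C|w| + \eta_{j,k}$ holds with $C$ depending only on $\sup|F'|$ over the uniformly bounded range of the $\vp_k$ — the ellipticity of $\Delta_{\tilde g}$ is only used to know $\Delta_{\tilde g}$ has the right sign at an interior spatial maximum, which holds regardless of how degenerate (but still nonnegative) $\tilde g$ is. So the constant $C$ in the final estimate is uniform down to $t=0$, and the argument goes through on all of $[0,T]\times M$. The main thing to get right is simply the bookkeeping of the constants $c_k$ and the uniform bound on $F'$.
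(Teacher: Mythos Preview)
Your approach is essentially the paper's: subtract the two flows, bound the $F$-difference by $\kappa|w|$ using the uniform $C^0$ bound on the $\vp_k$, apply the maximum principle to get an ODE for $\sup_M|w|$, and integrate. One small correction: the integral $\int_0^1\Delta_{g_s}w\,ds$ with $g_s=\omega+\ddbar(\vp_k+sw)$ is \emph{not} $\Delta_{\tilde g}w$ for $\tilde g=\int_0^1 g_s\,ds$ (taking the inverse does not commute with the integral); rather it equals $a^{i\bar j}w_{i\bar j}$ with $a^{i\bar j}=\int_0^1(g_s)^{i\bar j}\,ds>0$, which is still an elliptic operator and is all your argument actually uses. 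The paper sidesteps this linearization entirely by noting directly that at a spatial maximum of $w$ one has $\ddbar w\leq 0$, hence $\log\frac{(\omega+\ddbar\vp_k+\ddbar w)^n}{(\omega+\ddbar\vp_k)^n}\leq 0$ there (and $\geq 0$ at a minimum), which immediately gives $\frac{d}{dt}|w|_{\max}\leq \kappa|w|_{\max}+|\log(c_k/c_j)|$; apart from this cosmetic difference the two arguments coincide.
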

\begin{proof}
Fix $j,k$ and let $\mu = \vp_j - \vp_k$. Then
\[ \frac{\partial \mu}{\partial t} = \log\frac{(\omega +
\ddbar\vp_k +
\ddbar \mu)^n}{(\omega + \ddbar\vp_k)^n} +
F(\vp_j,z)-F(\vp_k,z)+\log\frac{c_k}{c_j},\]
and $\mu|_{t=0} = \psi_j-\psi_k$. 
At any time given time $t$, the maximum of $\mu$ is achieved at some point $z\in M$, and at $z$ we have
\[ \frac{ d \mu_{\mathrm{max}}}{dt} \leqslant F(\vp_j(t,z),z)-F(\vp_k(t,z),z)+\log\frac{c_k}{c_j}\leq
\kappa |\mu(z)|+\log\frac{c_k}{c_j},\]
where $\kappa$ is independent of $j,k$. Similarly, at the point $z'$ where the minimum of $\mu$ is achieved, we have
\[ \frac{ d \mu_{\mathrm{min}}}{dt} \geq -\kappa |\mu(z')|+\log\frac{c_k}{c_j}.\]
Putting these together we see that
\[\frac{ d |\mu|_{\mathrm{max}}}{dt} \leq \kappa |\mu|_{\mathrm{max}}+\left|\log\frac{c_k}{c_j}\right|,\]
where the derivative is interpreted as the limsup of the difference quotients at the points where it does not exist.
 It follows that
\[  \sup_{[0,T]\times M}| \vp_j -
	\vp_k|\leqslant e^{\kappa T} \left(\Vert \psi_j -
\psi_k\Vert_{L^\infty(M)}+\frac{1}{\kappa} \left|\log \frac{c_k}{c_j}\right|\right)-\frac{1}{\kappa}\left|\log\frac{c_k}{c_j}\right|.\]
Now (\ref{eq:stability}) and the fact that $c_k$ converges to $1$ imply the result.
\end{proof}

Using this lemma we can define
\[ \Phi = \lim_{j\to\infty} \vp_j,\]
which is in $C^0([0,T]\times M)$. 
Moreover from Proposition \ref{prop:flow} for any $\epsilon>0$
we have uniform bounds on all derivatives of the $\vp_j$ for
$t\in[\epsilon,T]$, so in fact for all $k$ we have
\[ \lim_{j\to\infty} \Vert \Phi - \vp_j\Vert_{C^k(M\times[\epsilon,T])} = 0.\]
From Equation (\ref{eq:phidot}) we get
\[ \sup_M |\dot{\vp}_k(t)| < C\sup_M |\dot{\vp}_k(0)|\]
for $t\in [0,T)$, but from (\ref{eq:flow1}) we have
\[ \dot{\vp}_k(0) = \log\frac{(\omega+\ddbar\psi_k)^n}{\omega^n} +
F(\psi_k,z)-\log c_k = F(\psi_k,z) - F(\vp_k,z)-\log c_k,\]
which converges to zero when $k$ goes to infinity.
It follows that for any $t > 0$ we have
\[ \dot{\Phi}(t) = \lim_{j\to\infty} \dot{\vp}_j(t) = 0. \]
Hence $\Phi$ is constant on $(0,T]$, but since it is continuous on $[0,T]$
 it follows that $\Phi(t)=\Phi(0)$ for all $t \leqslant T$. But
$\Phi(0)$ is our solution $\vp$ of Equation (\ref{eq:MA}), whereas
$\Phi(t)$ is smooth for $t>0$. Hence $\vp$ is smooth.

\bigskip
\noindent
Mathematics Department\\ Columbia University\\ New York, NY 10027

\end{document}